\title{\LARGE \bf Automated Worst-Case Performance Analysis of\\Decentralized Gradient Descent
}
\author{Sebastien Colla and Julien M. Hendrickx
\thanks{S. Colla  and  J.  M.  Hendrickx  are  with  the  ICTEAM institute, UCLouvain, Louvain-la-Neuve, Belgium. S. Colla is supported by the French Community of Belgium through a FRIA fellowship (F.R.S.-FNRS). J. M. Hendrickx is supported by the “RevealFlight” Concerted Research Action (ARC) of the Federation Wallonie-Bruxelles and by the Incentive Grant for Scientific Research (MIS) “Learning from Pairwise Comparisons” of the F.R.S.-FNRS. Email addresses: {\tt\small sebastien.colla@uclouvain.be}, {\tt\small julien.hendrickx@uclouvain.be}}
}
\begin{document}
\maketitle
\thispagestyle{empty}
\pagestyle{empty}

\begin{abstract}
  We develop a methodology to automatically compute worst-case performance bounds for a class of decentralized algorithms that optimize the average of local functions distributed across a network.
  We extend the recently proposed PEP approach to decentralized optimization. This approach allows computing the exact worst-case performance and worst-case instance of centralized algorithms by solving an SDP.
  We obtain an exact formulation when the network matrix is given, and a relaxation when considering entire classes of network matrices characterized by their spectral range.
  We apply our methodology to the decentralized (sub)gradient method, obtain a nearly tight worst-case performance bound that significantly improves over the literature, and gain insights into the worst communication networks for a given spectral range.
\end{abstract}
\section{Introduction} \vspace{-1mm}
The goal of this paper is to develop a methodology that automatically provides nearly tight performance bounds for primal first-order decentralized methods on convex functions.

Decentralized optimization has received an increasing attention due to its useful applications in large-scale machine learning and sensor networks, see for example references in this survey \cite{DGD}. In decentralized methods for separable objective functions, we consider a set of agents $\{1,\dots,N\}$, working together to optimize this global objective: \vspace{-2mm}
\begin{equation} \label{opt:dec_prob}
  \underset{\text{\normalsize $x \in \Rvec{d}$}}{\mathrm{minimize}} \quad f(x) = \frac{1}{N}\sum_{i=1}^N f_i(x), \vspace{-1mm}
\end{equation}
where $f_i$ is the private function locally held by agent $i$.
To achieve this goal, each agent $i$ of the system holds its own version $x_i$ of the decision variable $x \in \Rvec{d}$. Agents perform local computations and exchange local information with their neighbors to seek to reach an agreement on the minimizer $x^*$ of the global function $f$. Exchanges of information often take the form of a multiplication by a given matrix $W \in \Rmat{N}{N}$, typically assumed symmetric and doubly stochastic.

A classical example of decentralized optimization method is the decentralized (sub)gradient descent (DGD) \cite{DsubGD} where agents successively perform the consensus step \eqref{eq:DGD_cons} and the local gradient step \eqref{eq:DGD_comp}. We have, for all $i\in \{1,\dots,N\}$, \vspace{-2mm}
\begin{align}
  y_i^k &= \sum_{j=1}^N w_{ij} x_j^k,           \hspace*{15mm} \label{eq:DGD_cons} \\[-1mm]
  x_i^{k+1} &= y_i^k - \alpha \nabla f_i(x_i^k), \hspace*{15mm} \label{eq:DGD_comp} \vspace{-2mm}
\end{align}
where $\alpha > 0$ is a constant step-size.
Although the tools we develop in this paper are general, this simple algorithm is used as a case study in Section \ref{sec:NumRes}. This focus has been chosen for the simplicity of the algorithm and not for its performance. Indeed there exists many other decentralized algorithms that perform better, including EXTRA \cite{EXTRA}, DIGing \cite{DIGing}, NIDS \cite{NIDS}.
The analysis of these other algorithms will be the focus of future work.

The quality of an optimization method is often evaluated via a worst-case guarantee.
Obtaining theoretical worst-case performance bounds for decentralized algorithms can often be a challenging task, requiring combining the impact of the optimization component and the interconnection network. This can result in performance bounds that are not very tight.
For example, we will show in Section \ref{sec:NumRes} that the available performance bounds of DGD are significantly worse than the actual worst-cases.

In this work, we follow an alternative computational approach that finds a worst-case performance guarantee of an algorithm by solving an optimization problem. This is known as the performance estimation problem - PEP - and has been studied for centralized fixed-step first-order methods, see e.g. \cite{PEP_Smooth,PEP_composite}. PEP has never been applied to decentralized optimization methods. Particularly, the current PEP framework does not allow to represent matrix multiplications in the methods it analyzes, except if the matrix is explicitly given. This paper proposes possible solutions for this missing piece to the analysis of decentralized methods via PEP. Our contributions are the following:

We provide two formulations of the multiplication by a symmetric \emph{generalized doubly stochastic} matrix $W$, which is defined as a doubly stochastic matrix but without the restriction of being non-negative. Those formulations allow to write and solve PEP for a large class of decentralized methods. The first formulation is specific to a communication matrix $W$ given a priori, is exact, and is directly derived from the current PEP framework. It can be applied to any matrix $W$ and leads to tight performance bounds that are specific to this given matrix.
The second formulation is a relaxation that considers entire classes of possible matrices, based on their spectrum. It is our main methodological contribution.
We demonstrate the usefulness of these new formulations of PEP by analyzing the worst-case performance of DGD. For DGD, the second relaxed formulation leads to tight spectral performance bounds significantly improving on the existing theoretical ones. Our numerical experiments show that our bounds are independent of the number of agents $N$ and can be used to easily choose the optimal step-size of the method.

\section{General PEP approach}
In principle, a tight performance bound on an algorithm could be obtained by running it on every single instance - function and initial condition - allowed by the setting considered and selecting the worst performance obtained. This would also directly provide an example of “worst” instance if it exists.
The performance estimation problem (PEP) formulates this abstract idea as a real optimization problem that maximizes the error measure of the algorithm result, over all possible functions and initial conditions allowed. This optimization problem is inherently infinite-dimensional, as it contains a continuous function among its variables. Nevertheless, Taylor et al. have shown \cite{PEP_Smooth,PEP_composite} that PEP can be solved exactly for a wide class of centralized first-order algorithms on convex functions, using an SDP formulation.

We illustrate this approach with a simple example on $K$ steps of the centralized unconstrained subgradient descent. Let Perf denotes any performance measure for which we would like to find the worst-case, e.g. $f(x^K) - f(x^*)$. Perf can depend on the function $f$, its (sub)gradients, the minimizer $x^*$ and any of the iterates $x^k$. PEP for this algorithm can be written as follows: \vspace{-1mm}
\begin{align}
  \underset{f, x^0,\dots, x^K, x^*}{\sup} \quad & \mathrm{Perf}\qty(f, x^0,\dots,x^K, x^*) \hspace*{-4mm}\tag{Subgradient-PEP} \label{prob:GD_PEP}\\[-1mm]
 \text{s.t.} \hspace{8mm}       & \hspace{-5mm}    f \in \mc{F} \label{eq:fct_class}\\[-0.6mm]
                                & \hspace{-5mm} x^* = \underset{x}{\mathrm{argmin}}~ f(x), \\[-0.6mm]
                                & \hspace{-5mm} x^{k+1} = x^{k} - \alpha \nabla f(x^k), \quad \text{for } k=0,\dots,K-1, \\[-0.6mm]
                                & \hspace{-5mm} x^0 \text{ satisfies some initial conditions.} \\[-6.5mm]
\end{align}
where $\mc{F}$ denotes a class of functions and $\nabla f(x^k)$ denotes any subgradient of $f$ at $x^k$. \\
To overcome the infinite dimension of variable $f$, we notice that the problem \eqref{prob:GD_PEP} only uses the values and subgradients of the function at specific points: the iterates $x^0,\dots,x^K$ and the minimizer $x^*$.
This motivates the discretization of the problem: the decision variables are restricted to the iterations, subgradients, and function values associated with these specific points $\{\qty(x^k,g^k,f^k)\}_{k\in I}$ and we add the constraint that there is a function of the class $\mc{F}$ which interpolates those data points $\{\qty(x^k,g^k,f^k)\}_{k \in I}$.
This can be done using necessary and sufficient interpolation constraints for the functional class under consideration. Such constraints are provided for many different classes of functions in \cite[Section 3]{PEP_composite}.
For example, for the class of convex functions with bounded subgradients $\mc{F}_{B}$, we can use interpolation constraints from the following particularization of \cite[Theorem 3.5]{PEP_composite}, initially formulated for smooth convex functions with bounded gradient. \smallskip
\begin{theorem}[\hspace{-0.5pt}{{\cite[Theorem 3.5]{PEP_composite}}}]\label{thm:interp}
  Let $I$ be an index set.
  There exists a function $f \in \mc{F}_{B}$ such that $f^k = f(x^k)$ and $g^k = \nabla f(x^k)$ ($k\in I$) if and only if for all pair $k \ne l \in I$ \vspace{-1mm}
  \begin{align} \label{eq:interp_cond}
    \quad f^k &\ge f^l + \scal{g^l}{x^k-x^l}, \quad \text{and} \quad
    ||g^k||^2 &\le B^2. \\[-4mm]
  \end{align}
\end{theorem}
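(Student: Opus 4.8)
The plan is to establish the equivalence by the standard two-direction argument, with the reverse direction handled by an explicit construction. Necessity is immediate: if some $f\in\mc{F}_{B}$ satisfies $f^k=f(x^k)$ and $g^k\in\partial f(x^k)$ for all $k\in I$, then the first inequality in \eqref{eq:interp_cond} is just the subgradient inequality of the convex function $f$ applied to the pair $(x^l,x^k)$, and the second is the defining bound of the class $\mc{F}_{B}$.

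For sufficiency, suppose the data $\{(x^k,g^k,f^k)\}_{k\in I}$ satisfies \eqref{eq:interp_cond}, and consider the candidate
\[
  f(x)=\max_{l\in I}\ \big\{\,f^l+\scal{g^l}{x-x^l}\,\big\},
\]
a pointwise maximum of affine functions, hence convex. I would first check that it interpolates the prescribed values: \eqref{eq:interp_cond} states precisely that $f^l+\scal{g^l}{x^k-x^l}\le f^k$ for every $l\ne k$, with equality when $l=k$, so the maximum at $x^k$ equals $f^k$. Then, since the $k$-th affine piece lies below $f$ everywhere and touches it at $x^k$, we get $f(x)\ge f^k+\scal{g^k}{x-x^k}=f(x^k)+\scal{g^k}{x-x^k}$, i.e.\ $g^k\in\partial f(x^k)$, as required.

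What remains --- and this is the one step needing a little care --- is to show $f\in\mc{F}_{B}$, i.e.\ that \emph{every} subgradient of $f$, not merely the prescribed $g^k$, has norm at most $B$. I would argue that $f$ is $B$-Lipschitz: for any $x,y$, $|f(x)-f(y)|\le\max_{l\in I}|\scal{g^l}{x-y}|\le\big(\max_{l}\|g^l\|\big)\,\|x-y\|\le B\,\|x-y\|$; and a convex function that is $B$-Lipschitz has all its subgradients bounded by $B$, since along the direction of a subgradient $v$ the subgradient inequality forces growth of rate $\|v\|$, which Lipschitzness caps at $B$. An equivalent route is to invoke the subdifferential formula for a finite pointwise maximum, $\partial f(x)=\mathrm{conv}\{g^l: f^l+\scal{g^l}{x-x^l}=f(x)\}$, and use convexity of the norm. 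The same construction covers an infinite index set $I$ after replacing the maximum by a supremum; the interpolation inequalities guarantee the supremum is attained at each $x^k$, so the argument goes through unchanged. I expect every part of this to be routine substitution into \eqref{eq:interp_cond} except the bounded-subgradient check, which is the only place a nontrivial (though elementary) fact --- the Lipschitz/convexity-of-norm argument --- is invoked.
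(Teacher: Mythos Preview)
Your argument is correct and is the classical max-of-affine construction for convex interpolation, with the $B$-Lipschitz check handling the subgradient bound. Note, however, that the present paper does not actually prove this statement: it is quoted verbatim from \cite[Theorem~3.5]{PEP_composite} and invoked without proof, so there is no in-paper argument to compare against. For what it is worth, the construction you give is exactly the one used in the cited reference.
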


As it is the case for $\mc{F}_B$, the interpolation constraints are generally quadratics, potentially non-convex, involving scalar products and functional values.
They can be reformulated using a Gram matrix $G$ and a vector containing function values $f = [f_i]_{i \in I}$.
The Gram matrix is a symmetric positive semidefinite matrix containing scalar products between iterates $x^k \in \Rvec{d}$ and subgradients $g^k \in \Rvec{d}$ for $k \in I$. \vspace{-0.5mm}
\begin{align}
  G = P^TP, \text{ with } P = \qty[g^0\dots g^K g^* x^0\dots x^K x^*]. \\[-6mm]
\end{align}
Quadratics interpolations constraints from Theorem \ref{thm:interp} are linear in $G$ and $f$. Linear equality constraints, such as the constraints for the iterates of the algorithm, can also be expressed linearly with $G$, by isolating all elements on the same side and squaring the equation. This leads to an equivalent positive semidefinite program (SDP) for PEP with the Gram matrix $G$ and the vector of functional values $f$ as variables. Besides the reformulation of interpolation, optimality, iterates, and initial constraints, we should also impose that $G \succeq 0$ and $\text{rank } G \le d$. By relaxing this rank constraint, the formulation becomes independent of the dimension $d$ of the iterates and provides the worst case in any dimension.
This SDP formulation is convenient because it can be solved numerically to global optimality.
See \cite{PEP_composite} for details about the SDP formulation of PEP, including ways of reducing the size of matrix $G$. However, the dimension of $G$ always depends on the number of iterations $K$.

From a solution $G$, $f$ of the SDP formulation, we can construct, using Cholesky decomposition for example, a solution for the discretized variables $\{\qty(x^k,g^k,f^k)\}_{k\in I}$. Since these points satisfy sufficient interpolation constraints, we can also construct a function from $\mc{F}$ interpolating these points.

The following proposition states a sufficient condition under which a PEP can be formulated as an SDP, which can then be solved exactly. \smallskip
\begin{definition}[Gram-representable]
Consider a Gram matrix $G$ and a vector $f$, as defined previously. We say that
a constraint or an objective is linearly (resp. LMI) Gram representable if it can be expressed using a finite set of linear (resp. LMI) constraints involving (part of) $G$ and $f$.
\end{definition} \smallskip
\begin{proposition}[\hspace{-0.5pt}{{\cite[Proposition 2.6]{PEP_composite}}}] \label{prop:GramPEP}
  If the interpolation constraints of the class of functions $\mc{F}$, the satisfaction of the method $\mc{M}$, the performance measure Perf and the set of constraints $\mc{I}$, which includes the initial conditions, are linearly (or LMI) Gram representable, then, computing the worst-case for criterion Perf of method $\mc{M}$ after $K$ iterations on objective functions in class $\mc{F}$ with constraints $\mc{I}$ can be formulated as an SDP, with $G$ and $f$ as variables. \\
  This remains valid when the objective function is the sum of $N$ sub-functions.
\end{proposition}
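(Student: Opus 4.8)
The plan is to carry out, in a rigorous way, exactly the three-stage reduction that the excerpt sketches for the single-function case — discretize, lift to a Gram matrix, relax the rank — and then to observe that each stage goes through verbatim when the objective splits into $N$ sub-functions. Throughout, the key point is that each of the four ingredients (interpolation of $\mc{F}$, satisfaction of $\mc{M}$, the constraints $\mc{I}$, the measure $\mathrm{Perf}$) has been \emph{assumed} linearly or LMI Gram-representable, so the work is to show that assembling them really does yield an SDP whose optimal value equals the worst case.

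First I would make the discretization lossless. The worst case is the supremum of $\mathrm{Perf}$ over all tuples consisting of a function $f$, the iterates produced by $\mc{M}$, and the minimizer $x^*$, subject to $f \in \mc{F}$ and to $\mc{I}$. Since $\mathrm{Perf}$, $\mc{M}$ and $\mc{I}$ only access $f$ through its values and (sub)gradients at the finitely many points of the index set $I$, I would replace the functional variable $f$ by the finite list of triples $\{\qty(x^k,g^k,f^k)\}_{k\in I}$ together with the clause ``there exists $f\in\mc{F}$ interpolating these triples''. By the assumed necessary-and-sufficient interpolation description of $\mc{F}$ (as in Theorem \ref{thm:interp}), this clause is equivalent to a finite Gram-representable set of constraints, and the substitution changes nothing: any feasible $f$ induces feasible triples, and any feasible triples extend to a feasible $f$. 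Hence the supremum over the discretized variables equals the original worst case, and a worst instance can be reconstructed from a discretized optimum.

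Second I would lift to the Gram matrix. Stacking the vectors $g^k,x^k\in\Rvec{d}$ ($k\in I$) into $P$ and setting $G=P^TP$ gives $G\succeq 0$ with $\mathrm{rank}\,G\le d$; conversely every symmetric $G\succeq0$ of rank at most $d$ factors as $P^TP$, uniquely up to an orthogonal change of basis that leaves every inner product — hence all problem data — invariant. Every scalar product $\scal{u}{v}$ among these vectors is linear in the entries of $G$, a linear equality $a=b$ among them (such as the iterate recursions of $\mc{M}$) is equivalent to $\|a-b\|^2=0$, again linear in $G$, and functional values are carried by the separate vector $f$. By the Gram-representability hypotheses, the interpolation constraints, $\mc{M}$, $\mc{I}$ and $\mathrm{Perf}$ therefore all become linear or LMI conditions in $(G,f)$; the only non-SDP ingredient left is $\mathrm{rank}\,G\le d$. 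Dropping it is a relaxation, and since this enlarges the feasible set to Gram matrices of every dimension, the resulting supremum is precisely the worst case over all dimensions $d$. What remains is the maximization of a linear objective in $(G,f)$ over linear and LMI constraints with $G\succeq0$, i.e. an SDP.

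For the sum $f=\frac1N\sum_i f_i$, I would run the same construction block by block: the index set becomes the union of the points at which $\mc{M}$ evaluates each $f_i$ (its local iterates and gradients, plus the shared minimizer), all corresponding vectors are stacked into a single $P$ yielding one Gram matrix $G$, and one vector $f$ collects all the local functional values; the interpolation constraints of $\mc{F}$ are imposed once per sub-function on its own block, still a finite Gram-representable set. The rest is unchanged, so the reduction again produces an SDP. I expect the only genuinely delicate step to be the first one — justifying that restricting $f$ to its samples on $I$, and replacing ``$\exists f\in\mc{F}$ interpolating'' by the finitely many interpolation constraints, does not alter the optimal value; this is where the necessity \emph{and} sufficiency of the interpolation conditions is essential. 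The Gram lifting, the squaring of equalities, the rank relaxation, and the $N$-function bookkeeping are then routine.
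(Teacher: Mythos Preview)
Your proposal is correct and follows exactly the three-stage reduction (discretization via interpolation conditions, Gram lifting, rank relaxation) that the paper sketches in the paragraphs preceding the proposition; note, however, that the paper does not give its own proof of Proposition~\ref{prop:GramPEP} but simply cites \cite[Proposition~2.6]{PEP_composite} and remarks that the extension to LMI constraints is direct. Your write-up is thus a faithful reconstruction of the argument the paper defers to the reference.
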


\emph{Remark:}
Proposition 2.6 in \cite{PEP_composite} was only formulated for linearly Gram-representable constraints, but its extension to LMI Gram-representable constraints is direct. Such constraints appear in the analysis of classes of network matrices.

PEP techniques allowed answering several important questions in optimization, see e.g. the list in \cite{Taylor_thesis}, and to make important progress in the tuning of certain algorithms including the well-known centralized gradient-descent. Following a numerical exploration in \cite{PEP_Drori}, it was further exploited by Kim and Fessler to design the Optimized Gradient Method (OGM), the fastest possible first-order optimization method for smooth convex functions \cite{OGM}.
It can also be used to deduce proofs about the performance of the algorithms \cite{PEP_compo}. It has been made widely accessible via a Matlab toolbox \cite{PESTO}.
However, PEPs have never been used to study decentralized methods. The main challenge is that
there is no representation of the interconnections between the agents that can be embedded in the formulation.

We also note an alternative approach with similar motivations %
for automated performance evaluation that is proposed in \cite{IQC}, and is inspired by dynamical systems concepts. Integral quadratic constraints (IQC), usually used to obtain stability guarantees on complex dynamical systems, are adapted in order to obtain sufficient conditions for the convergence of optimization algorithms. It provides iteration-independent linear rates of convergence, based on relatively small size problems, but it does not apply when the convergence is not geometric. Unlike PEP, it offers no a priori guarantee of tightness, though it turns out to be tight in certain situations.
An application of the IQC methodology to decentralized optimization is presented in \cite{IQC_dec} and has already been used for designing a new algorithm that achieves a faster worst-case convergence rate. But this methodology cannot be directly applied to DGD, nor to smooth convex functions or any other situation that does not have a geometric convergence. Also, this IQC approach focuses on one iteration of the algorithm to derive the worst-case convergence rate, and hence, it cannot exploit situations where the communication matrix is identical at each iteration to improve it.

\section{Representations of consensus steps for PEP}
In this section, we present a way of representing the interactions between agents and thereby providing the missing block to PEP formulation for first-order decentralized optimization. We focus on the situation where the interactions take place via a weighted averaging and can thus be described as a consensus step of the following form: \vspace{-2mm}
\begin{equation} \label{eq:consensus}
    y_i = \sum_{j=1}^N w_{ij} x_j, \qquad \text{for all $i\in\{1,\dots,N\}$,} \vspace{-2mm}
\end{equation}
where $x_j$ can represent any vector in $\Rvec{d}$ held by agent $j$, e.g. its local iterates in the case of DGD but it could be more evolved. Vector $y_i \in \Rvec{d}$ is an auxiliary variable that represents the result of the interaction and $W \in \Rmat{N}{N}$ is the weighted communication matrix.
While we focus on DGD in section IV, this form of communication is used in many other decentralized methods such as EXTRA \cite{EXTRA}, DIGing \cite{DIGing}, NIDS \cite{NIDS} and the results presented in this section can be exploited for all these methods too. \\
We suppose the communication matrix $W$ symmetric, i.e. $w_{ij} = w_{ji}$ and \emph{generalized doubly stochastic} \cite{gds, gds_first}, i.e. $\sum_{i=1}^N w_{ij} = 1$, $\sum_{j=1}^N w_{ij} = 1$, for all $i,j$.
This last assumption is a relaxation of the more usual double stochasticity since it does not require elements of $W$ to be non-negative. However, many results from the literature on decentralized optimization are based on spectral information and do not use the non-negativity of $W$ either, see e.g. \cite{DGD, EXTRA, NIDS}. We analyze the impact of this relaxation in the case of DGD in Section \ref{sec:NumRes}.

\subsection{Communication matrix given a priori}
When the communication matrix is given \emph{a priori}, the consensus step \eqref{eq:consensus} is a simple linear equality constraint that is linearly Gram-representable. In that case, the constraint can be used in the SDP formulation of a PEP, see Proposition \ref{prop:GramPEP}. This allows writing PEPs that provide exact worst-case performances for the given decentralized method and the specific communication matrix given \emph{a priori}. It can be applied to any matrix $W$, and not only for generalized doubly stochastic ones.
This can be useful for trying different communication matrices and observing their impact on the worst-case performance of the algorithm. It will serve as an exact comparison baseline in the numerical experiments of Section \ref{sec:NumRes}.
The next section presents a way of representing communications in PEP that allows obtaining more general performance guarantees, valid over entire classes of communication matrices and not only for a specific one.

\subsection{Communication matrix as a variable}
We now consider that the matrix $W$ is not given \emph{a priori}, but is \emph{one of the decision variables} of the performance estimation problem with bounds on its possible eigenvalues. Hence the search space contains the matrix $W$ in addition to the usual variables, and is restricted by the following constraints, for each agent $i\in\{1,\dots,N\}$ and each consensus steps $k\in\{1,\dots,K\}$, \vspace{-0.5mm}
\begin{align}
  W &\in \Wcl{\lm}{\lp}, \label{eq:cons_1} \\[-0.5mm]
  y_i^k &= \sum_{j=1}^N w_{ij} x_j^k, \label{eq:cons_2} \\[-6mm]
\end{align}
where $\Wcl{\lm}{\lp}$ is the set of real, symmetric and generalized doubly stochastic matrices that have their eigenvalues between $\lm$ and $\lp$, except for $\lam_1 = 1$:
$$ \lm \le \lam_N\le\cdots\le\lam_2\le\lp \quad \text{where $\lm, \lp \in \qty[-1,1]$ }. \vspace{-1mm}$$
The set of the different consensus steps represented by indices $k=1,\dots,K$ can for example correspond to the set of the different iterations of the algorithm, but it could also be a subset of the iterations if the communication matrix changes, or other subsets of the consensus steps if different consensus \eqref{eq:cons_2} are applied to the same iteration.

We do not have a direct way for representing constraints \eqref{eq:cons_1} and \eqref{eq:cons_2} in an LMI Gram-representable manner, but we construct a relaxation that we will see in section \ref{sec:NumRes} is often close to tight. From constraints \eqref{eq:cons_1} and \eqref{eq:cons_2}, we derive new necessary conditions involving only variables $y_i^k$ and $x_i^k$ allowing to eliminate $W$ from the problem.

We first restrict ourselves to the simpler case when the local variables are one-dimensional: $x_i^k, y_i^k \in \Rvec{d}$ with $d=1$.

Let $X$ and $Y$ be the following matrices from $\Rmat{N}{K}$:
$$ X_{ik} = x_i^k \quad \text{and}\quad Y_{ik} = y_i^k  \qquad \text{for ~~$\substack{i=1,\dots,N \\ k=1,\dots,K}$ }$$
Each column corresponds thus to a different consensus step $k$, and each row to a different agent $i$. Using this notation, the consensus steps constraints \eqref{eq:cons_2} can simply be written as $Y = W X$.
We decompose matrices $X$ and $Y$ in average and centered parts: \vspace{-1mm}
$$ X = \mathbf{1} \Xb^T + \Xc, \qquad Y =  \mathbf{1}\Yb^T + \Yc,$$
where $\Xb$ and $\Yb$ are agents average vectors in $\Rvec{K}$, defined as $\Xb_k = \frac{1}{N} \sum_{i=1}^N x_i^k$, $\Yb_k = \frac{1}{N} \sum_{i=1}^N y_i^k$ for $k=1,\dots,K$ and $\mathbf{1} = \qty[1\dots 1]^T$.
Using these notations, we state the new necessary conditions in the following theorem.

\begin{theorem}[Consensus Constraints] \label{thm:conscons}
If $Y = WX$ for a matrix $W\in \Wcl{\lm}{\lp}$, with $X, Y \in \Rmat{N}{K}$, then
\begin{enumerate}[(i)]
  \item The matrices $X^TY$ and $\Xc^T\Yc$ are symmetric,
  \item The following constraints are satisfied \vspace{-0.5mm}
  \begin{align}
    \Xb &= \Yb,  \label{eq:eq_mean} \\
    \lm \Xc^T \Xc ~ \preceq  ~\Xc^T \Yc~ &\preceq ~\lp \Xc^T \Xc, \label{eq:scal_cons} \\
    (\Yc - \lm \Xc)^T(\Yc - \lp \Xc) ~&\preceq ~0, \hspace{20mm} \label{eq:var_red} \\[-7mm]
  \end{align}
  where the notations $\succeq$ and $\preceq$ denote respectively positive and negative semi-definiteness.
  \item Constraints  \eqref{eq:eq_mean}, \eqref{eq:scal_cons},  \eqref{eq:var_red} are LMI Gram-representable.
\end{enumerate}
\end{theorem}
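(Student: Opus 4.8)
The plan is to first distil what membership $W \in \Wcl{\lm}{\lp}$ buys us about the linear map $X \mapsto WX$, and then to read off (i), (ii), (iii) in turn.

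First I would record the two algebraic facts behind everything: since $W$ is symmetric and generalized doubly stochastic, $W\mathbf{1} = \mathbf{1}$ and $\mathbf{1}^T W = \mathbf{1}^T$, so $W$ fixes the all-ones direction and leaves the centering subspace $\{z \in \Rvec{N} : \mathbf{1}^T z = 0\}$ invariant. Substituting $X = \mathbf{1}\Xb^T + \Xc$ into $Y = WX$ gives $Y = \mathbf{1}\Xb^T + W\Xc$ with $\mathbf{1}^T (W\Xc) = \mathbf{1}^T \Xc = 0$; comparing with the unique split $Y = \mathbf{1}\Yb^T + \Yc$ yields at once $\Yb = \Xb$, which is \eqref{eq:eq_mean}, and $\Yc = W\Xc$. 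Part (i) is then immediate: $X^TY = X^TWX$ and $\Xc^T\Yc = \Xc^T W \Xc$ are symmetric because $W$ is.

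For part (ii) I would diagonalize $W$ in an orthonormal eigenbasis $u_1 = \tfrac{1}{\sqrt N}\mathbf{1}, u_2,\dots,u_N$ with eigenvalues $1, \lambda_2,\dots,\lambda_N$, where $\lambda_i \in [\lm, \lp]$ for $i \ge 2$. Every column of $\Xc$, hence every vector $z = \Xc v$ with $v \in \Rvec{K}$, lies in the centering subspace, so the $u_1$-component of $z$ vanishes. From $\lm\|z\|^2 \le z^T W z \le \lp\|z\|^2$ for such $z$, together with $z^TWz = v^T\Xc^T\Yc v$ and $\|z\|^2 = v^T\Xc^T\Xc v$, letting $v$ range over $\Rvec{K}$ gives \eqref{eq:scal_cons}. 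For \eqref{eq:var_red} I would use $\Yc - \lm\Xc = (W-\lm I)\Xc$ and $\Yc - \lp\Xc = (W-\lp I)\Xc$, so that $(\Yc-\lm\Xc)^T(\Yc-\lp\Xc) = \Xc^T(W-\lm I)(W-\lp I)\Xc$, and expand $z = \Xc v$ in the eigenbasis: $v^T\Xc^T(W-\lm I)(W-\lp I)\Xc v = \sum_{i=2}^N (\lambda_i - \lm)(\lambda_i - \lp)\,\scal{u_i}{z}^2 \le 0$, since each factor $(\lambda_i - \lm)(\lambda_i - \lp)$ is $\le 0$ on $[\lm,\lp]$. The point that needs care here, and the reason Step 1 cannot be skipped, is that $(W-\lm I)(W-\lp I)$ is \emph{not} negative semidefinite on all of $\Rvec{N}$: on $u_1$ it equals $(1-\lm)(1-\lp) \ge 0$, so the inequality holds only because the columns of $\Xc$ annihilate that direction.

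Finally, for part (iii) I would invoke the PEP Gram machinery: the scalars $x_i^k$ and $y_i^k$ are among the quantities whose pairwise products are collected in $G$ (the consensus outputs being PEP variables alongside the iterates), so each entry of $\Xc^T\Xc$, $\Xc^T\Yc$ and $\Yc^T\Yc$ is a fixed linear combination of entries of $G$. Then \eqref{eq:eq_mean} is equivalent to the scalar equation $\sum_{k=1}^K (\Xb_k - \Yb_k)^2 = 0$, which is linear in $G$ and hence linearly — a fortiori LMI — Gram-representable; and \eqref{eq:scal_cons}, \eqref{eq:var_red} are inequalities between symmetric $K\times K$ matrices whose entries are affine in $G$ (the matrix in \eqref{eq:var_red} being symmetric either by part (i) or because $(W-\lm I)(W-\lp I)$ is), hence LMI Gram-representable. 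I expect the proof to be short overall, with the sign analysis of $(W-\lm I)(W-\lp I)$ on the centering subspace the only spot demanding attention.
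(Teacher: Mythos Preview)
Your proposal is correct and follows essentially the same route as the paper's proof: establish $\Yb=\Xb$ and $\Yc=W\Xc$ from the generalized doubly-stochastic structure, read off the symmetry of $X^TY$ and $\Xc^T\Yc$, then expand $\Xc v$ in an orthonormal eigenbasis of $W$ (where the $\mathbf{1}$-component vanishes) to get \eqref{eq:scal_cons} and \eqref{eq:var_red}, and finally note that all resulting constraints are linear or LMI in the Gram entries. Your factored form $(\Yc-\lm\Xc)^T(\Yc-\lp\Xc)=\Xc^T(W-\lm I)(W-\lp I)\Xc$ and the accompanying remark that this operator fails to be negative semidefinite along $\mathbf{1}$ are a slightly cleaner packaging of the same eigenbasis computation the paper carries out, but not a different argument.
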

\begin{proof}
    First, we average elements from both sides of the assumption $Y = WX$ to obtain constraint \eqref{eq:eq_mean}: \vspace{-1mm}
    $$ \Yb = \frac{\mathbf{1}^T Y}{N} = \frac{\mathbf{1}^T WX}{N} = \frac{\mathbf{1}^T X}{N} = \Xb. \vspace{-1mm} $$
    where $\mathbf{1}^TW = \mathbf{1}^T$ follows from $W$ being generalized doubly stochastic, i.e. its rows and columns sum to one. \\
    The symmetry of the matrix $X^TY$ is directly obtained using the assumption $Y = WX$, with $W$ being symmetric. We can use the same argument for the symmetry of $\Xc^T\Yc$, because having $Y = WX$ and $\Xb = \Yb$ implies that $\Yc = W\Xc$. \\
    Since the communication matrix $W$ is real and symmetric, we can take an orthonormal basis $\mathbf{v_1},\dots,\mathbf{v_N}$ of eigenvectors, corresponding to real eigenvalues
    $ \lam_N\le\cdots\le\lam_2 \le \lam_1$.
    The largest eigenvalue is $\lambda_1 = 1$ and corresponds to the eigenvector $\mathbf{v_1} = \mathbf{1}$. Indeed $W$ is doubly-stochastic and have all its other eigenvalues below 1 by assumption:
    $$\lm \le \lam_i \le \lp \text{ for $i=2,\dots,N$, with $\lm, \lp \in [-1,1]$.} $$
    Let us now consider a combination $\Xc z$ of the columns of the matrix $\Xc$, for an arbitrary $z \in \Rvec{K}$. It can be decomposed in the eigenvectors basis of $W$, and used to express the combination $\Yc z$: \\[-8.5mm]

    \small
    \begin{align} \label{eq:ev_basis}
      \hspace{-2mm} \Xc z = 0 \mathbf{v_1} + \sum_{i = 2}^N \gamma_i \mathbf{v}_i, \text{ and }
      \Yc z = W \Xc z = \sum_{i = 2}^N \gamma_i \lambda_i \mathbf{v}_i, \hspace{4mm}  \\[-6.5mm]
    \end{align}
    \normalsize
    where $\gamma_i$ are real coefficients. The coefficient for $\mathbf{v_1} = \mathbf{1}$ is zero because $\Xc z$ is orthogonal to this eigenvector since it is centered: $\mathbf{1}^T(\Xc z) = 0$.
    Using this decomposition to compute the scalar product $z^T \Xc \Yc z$ for any $z \in \Rvec{K}$ leads to the following scalar inequalities \vspace{-2mm}
    \begin{align}
      z^T \Xc^T \Yc z = \sum_{i = 2}^N \gamma_i^2 \lambda_i &\ge  \lm z^T \Xc^T \Xc z, \\[-4mm]
                                                            &\le  \lp z^T  \Xc^T \Xc z.
    \end{align}
    Having these inequalities satisfied for all $z \in \Rvec{K}$, is equivalent to \eqref{eq:scal_cons}.
    In the same way, \eqref{eq:var_red} is obtained by verifying that the following inequality hold for all $z \in \Rvec{K}$:
    $$ (\Yc z - \lm \Xc z)^T(\Yc z - \lp \Xc z) \le 0. $$
    This can be done by substituting $\Xc z$ and $\Yc z$ using equation \eqref{eq:ev_basis}, and by using the bounds on $\lam_i$ ($i=2,\dots,N$). \\
    Finally, constraints \eqref{eq:eq_mean}, \eqref{eq:scal_cons} and  \eqref{eq:var_red} can all be formulated as LMIs involving submatrices of the Gram matrix $G$ of scalar products. Therefore, they are LMI Gram-representable.
\end{proof}
\smallskip
Using Theorem \ref{thm:conscons}, we can relax constraints \eqref{eq:cons_1} and \eqref{eq:cons_2} and replace them by \eqref{eq:eq_mean}, \eqref{eq:scal_cons} and  \eqref{eq:var_red}, which are LMI Gram-representable.
Then, Proposition \ref{prop:GramPEP} allows to write a relaxed SDP formulation of a PEP providing worst-case results valid for the entire spectral class of matrices $\Wcl{\lm}{\lp}$.

Constraint \eqref{eq:eq_mean} is related to the stochasticity of the communication matrix and imposes that variable $x$ has the same agents average as $y$, for each consensus step.
Linear matrix inequality constraints \eqref{eq:scal_cons} and  \eqref{eq:var_red} imply in particular equivalent scalar constraints for the diagonal elements.
They corresponds to independent constraints for each consensus step, i.e. for each column $\xc$ and $\yc$ of matrices $\Xc$, $\Yc$: \vspace{-0.5mm}
\begin{align}
  \lm \xc^T \xc \le \xc^T\yc &\le \lp \xc^T \xc, \label{eq:scal_cons_ind}\\
  (\yc - \lm \xc)^T(\yc - \lp \xc) & \le 0. \label{eq:var_red_ind} \\[-6.5mm]
\end{align}
These constraints imply in particular that \vspace{-1mm} $$\yc^T \yc \le \lmax^2 \xc^T\xc,\quad \text{where $\lmax = \max(|\lm|,|\lp|)$,} \vspace{-1mm}$$
meaning that the disagreement between the agents, measured by $\yc^T \yc$ for $y$ and $\xc^T \xc$ for $x$, is reduced by a factor $\beta^2$ after a consensus.
But constraints \eqref{eq:scal_cons} and  \eqref{eq:var_red} also allow linking different consensus steps to each other, via the impact of off-diagonal terms, in order to exploit the fact that these steps use the same communication matrix. Moreover, if different communication matrices are used for different sets of consensus steps, the constraints from Theorem \ref{thm:conscons} can be applied independently for each set of consensus steps.

Theorem \ref{thm:conscons} is derived for $x_i^k, y_i^k \in \R$ but the worst-case solution of PEP has no guarantee to be one-dimensional. When the local variables are multi-dimensional: $x_i^k, y_i^k\in\Rvec{d}$ with $d\ge1$, a natural approach would be to impose the constraints independently for each dimension. This might introduce some conservatism in our relaxed formulation, as it would allow each dimension to use a different matrix. But independently of this issue, this approach cannot be directly implemented in PEP for constraints \eqref{eq:scal_cons} and \eqref{eq:var_red} because we cannot access the different dimensions of a variable in the SDP formulation which only allows using scalar products.
One solution is to sum these constraints over all dimensions. This may lead to the constraints not being exactly met for a specific dimension and may thus also introduce conservatism in our relaxed formulation.
The following corollary presents the resulting constraints after the sum on the dimensions.
The matrices $X^j, Y^j \in \Rmat{N}{K}$ contains the element from dimension $j$ of each $x_i^k$, $y_i^k$ variables and the matrices $X, Y \in \Rmat{Nd}{K}$ stack each $X^j$ and $Y^j$ vertically. \smallskip
\begin{corollary}[Theorem \ref{thm:conscons} for $d\ge1$] \label{cor:consD}
  If $Y^j = WX^j$, for every $j=1,\dots,d$ and for a same matrix $W\in \Wcl{\lm}{\lp}$,
  i.e. $Y = (I_d \otimes W) X$, where $\otimes$ is the Kronecker product, then
  \begin{enumerate}[(i)]
    \item The matrices $X^TY$ and $\Xc^T\Yc$ are symmetric,
    \item Constraints \eqref{eq:eq_mean}, \eqref{eq:scal_cons} and \eqref{eq:var_red} are satisfied.
    \item Constraints \eqref{eq:eq_mean}, \eqref{eq:scal_cons}, \eqref{eq:var_red} are LMI Gram-representable.
  \end{enumerate}
\end{corollary}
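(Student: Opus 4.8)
The plan is to reduce everything to Theorem~\ref{thm:conscons} applied coordinate by coordinate, and then add the resulting (in)equalities. Since $Y=(I_d\otimes W)X$ is exactly the conjunction of the relations $Y^j=WX^j$ over $j=1,\dots,d$, and each $X^j,Y^j\in\Rmat{N}{K}$, Theorem~\ref{thm:conscons} applies verbatim to every pair $(X^j,Y^j)$, with the decomposition $X^j=\mathbf{1}(\Xb^j)^T+\Xc^j$ (the mean being taken over the $N$ agents), and likewise for $Y^j$. This yields, for every $j$: the matrices $(X^j)^TY^j$ and $(\Xc^j)^T\Yc^j$ are symmetric; $\Xb^j=\Yb^j$; $\lm (\Xc^j)^T\Xc^j\preceq(\Xc^j)^T\Yc^j\preceq\lp(\Xc^j)^T\Xc^j$; and $(\Yc^j-\lm\Xc^j)^T(\Yc^j-\lp\Xc^j)\preceq0$.

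Next I would use that, because the $X^j$ (resp.\ $Y^j$) are stacked vertically, one has $X^TY=\sum_{j=1}^d(X^j)^TY^j$, and similarly $\Xc^T\Yc=\sum_j(\Xc^j)^T\Yc^j$, $\Xc^T\Xc=\sum_j(\Xc^j)^T\Xc^j$, $\Yc^T\Yc=\sum_j(\Yc^j)^T\Yc^j$. Point~(i) is then immediate, a sum of symmetric matrices being symmetric. For point~(ii): \eqref{eq:eq_mean} holds coordinate by coordinate since $\Xb^j=\Yb^j$ for every $j$; summing the per-coordinate versions of \eqref{eq:scal_cons} over $j$ and using that the positive semidefinite cone is closed under addition gives \eqref{eq:scal_cons} for the stacked matrices; and expanding $(\Yc-\lm\Xc)^T(\Yc-\lp\Xc)=\Yc^T\Yc-(\lm+\lp)\Xc^T\Yc+\lm\lp\,\Xc^T\Xc=\sum_j(\Yc^j-\lm\Xc^j)^T(\Yc^j-\lp\Xc^j)$ (merging the two cross terms thanks to the symmetry of each $(\Xc^j)^T\Yc^j$) exhibits it as a sum of matrices that are each $\preceq0$, hence $\preceq0$, which is \eqref{eq:var_red}.

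For point~(iii), the key remark is that after the summation the constraints only involve the $K\times K$ matrices $X^TY$, $\Xc^T\Yc$, $\Xc^T\Xc$, $\Yc^T\Yc$ together with the vector equalities $\Xb_k=\Yb_k$. These are all expressible through the Gram matrix $G$ of the $d$-dimensional iterates, since, e.g., $(\Xc^T\Yc)_{kl}=\sum_{i=1}^N\scal{x_i^k-\bar{x}^k}{\,y_i^l-\bar{y}^l}$ is a fixed linear combination of the scalar products recorded in $G$ (and similarly for the others, while $\bar{x}^k=\frac1N\sum_i x_i^k$ is itself a linear combination of iterates). Hence \eqref{eq:scal_cons} and \eqref{eq:var_red} are LMIs in $G$, and \eqref{eq:eq_mean} is a linear equality among iterates, which is linearly Gram-representable exactly as for the iterate constraints discussed in the general PEP approach; LMI Gram-representability follows.

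I do not expect a genuine difficulty: the argument is essentially ``apply Theorem~\ref{thm:conscons} $d$ times and sum''. The only places requiring care are bookkeeping ones: the vertical stacking makes all the relevant Gram-type $K\times K$ matrices add up coordinate by coordinate (note in particular that the centering is performed per coordinate — there is no single common direction $\mathbf{1}_{Nd}$ at play), and one must keep track of symmetry so that every $\preceq$ relation entering \eqref{eq:var_red} is between genuinely symmetric matrices before the summation. The mild conservatism of retaining only the coordinate-summed constraints instead of the per-coordinate ones is not part of what must be proved here; it is already acknowledged in the discussion preceding the corollary.
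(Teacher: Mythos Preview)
The paper does not actually prove this corollary in the present version: right after the statement it says ``The proof of this corollary will be provided in the journal version of this article.'' So there is nothing to compare against line by line.

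That said, your argument is correct and is exactly the route the paper itself signals just before the corollary (``sum these constraints over all dimensions''). The decomposition $X^TY=\sum_j(X^j)^TY^j$ (and its centered analogues) via vertical stacking is the right bookkeeping, the preservation of symmetry and of semidefiniteness under finite sums handles (i) and the two LMIs in (ii), and your expansion for \eqref{eq:var_red} correctly uses the per-coordinate symmetry of $(\Xc^j)^T\Yc^j$ to merge the cross terms before summing. Your treatment of (iii) --- that the summed constraints involve only inner products and agent-averages of the $d$-dimensional iterates, hence fixed linear combinations of entries of $G$ --- is also the intended one, mirroring part (iii) of Theorem~\ref{thm:conscons}. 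One small point worth making explicit: the centered stacked matrix $\Xc$ is, by definition in this multi-dimensional setting, the vertical stacking of the $\Xc^j$, i.e.\ centering is done blockwise with respect to $I_d\otimes\frac{\mathbf{1}\mathbf{1}^T}{N}$ rather than with $\mathbf{1}_{Nd}$; you already flag this, and it is what makes the identity $\Xc^T\Yc=\sum_j(\Xc^j)^T\Yc^j$ hold.
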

The proof of this corollary will be provided in the journal version of this article.

\section{Decentralized (sub)gradient descent (DGD): \\ A case study}
\label{sec:NumRes}
Using results from previous sections, we can build two PEP formulations for analyzing the worst-case performance of the well-known decentralized gradient descent. We consider $K$ iterations of DGD described by \eqref{eq:DGD_cons} and \eqref{eq:DGD_comp}, with constant step-size, in order to solve problem \eqref{opt:dec_prob}, i.e. minimizing $f = \frac{1}{N}\sum_{i=1}^N f_i(x)$, with $x^*$ as minimizer of $f$.
There are different studies about DGD; e.g. \cite{DGD1} shows that its iterates converge to a neighborhood of the optimal solution $x^*$ when the step-size is constant. In the following analysis, we consider the results of a recent survey \cite{DGD} providing a theoretical bound for the functional error at the average of all the iterates. This error tends to zero since it considers the average of all the iterates and not only the last one. \smallskip

\begin{theorem}[Performance of DGD {{\cite[Theorem 8]{DGD}}}] \label{thm:bound}
Let $f_i,\dots,f_N$ be convex local functions with subgradients bounded by $B$. Let $x^0$ be an identical starting point of each agent such that $||x^0 - x^*||^2 \le R^2$. And let $W$ be a symmetric and doubly stochastic matrix with eigenvalues $\lam_2,\dots,\lam_N \in \qty[-\lam,\lam]$, for some $\lam \in [0,1)$. \\
If we run DGD for $K$ steps with a constant step-size $\alpha = \frac{1}{\sqrt{K}}$, then there holds\footnote{Note the factor 2 in the second term of the bound \eqref{eq:th_bound} was missing in \cite{DGD} but its presence was confirmed by the authors of \cite{DGD}.}
\vspace{-3pt}
\begin{equation} \label{eq:th_bound}
  f(\xmoy ) - f(x^*) \le \frac{R^2 + B^2}{2 \sqrt{K}} + \frac{2 B^2}{\sqrt{K}(1-\lam)}, \vspace{-3pt}
\end{equation}
where $\xmoy = \frac{1}{N(K+1)} \sum_{i=1}^N \sum_{k=0}^K x_i^k $ is the average over all the iterations and all the agents.
\end{theorem}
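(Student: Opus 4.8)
\medskip
\noindent\textbf{Proof plan.}
The plan is to track the network-average iterate $\bar x^k := \frac1N\sum_{i=1}^N x_i^k$, show it obeys a \emph{perturbed} centralized subgradient recursion on $f$, run the classical subgradient analysis on that recursion, and separately control the perturbation --- the inter-agent disagreement --- via a consensus-contraction estimate. \emph{Step 1 (average dynamics):} since $W$ is doubly stochastic, $\mathbf 1^\top W = \mathbf 1^\top$, so averaging \eqref{eq:DGD_cons}--\eqref{eq:DGD_comp} over $i$ yields $\bar x^{k+1} = \bar x^k - \alpha\, g^k$ with $g^k := \frac1N\sum_i \nabla f_i(x_i^k)$ and $\|g^k\| \le B$ by convexity of the norm; as the agents start together, $\bar x^0 = x^0$ and $\|\bar x^0 - x^*\| \le R$.

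\emph{Step 2 (perturbed subgradient inequality).} I would expand $\|\bar x^{k+1} - x^*\|^2 = \|\bar x^k - x^*\|^2 - 2\alpha\langle g^k, \bar x^k - x^*\rangle + \alpha^2\|g^k\|^2$ and lower-bound the cross term by splitting $\bar x^k - x^* = (\bar x^k - x_i^k) + (x_i^k - x^*)$: convexity of $f_i$ (applied at $x_i^k$ to compare with $x^*$, and applied at $\bar x^k$ to lower-bound $f_i(x_i^k)$) together with Cauchy--Schwarz and $\|\nabla f_i\|\le B$ gives $\langle g^k, \bar x^k - x^*\rangle \ge f(\bar x^k) - f(x^*) - \frac{2B}{N}\sum_i\|x_i^k - \bar x^k\|$. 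Rearranging, summing over $k=0,\dots,K$, discarding $-\|\bar x^{K+1}-x^*\|^2 \le 0$, dividing by $2\alpha(K+1)$, and invoking convexity of $f$ at $\xmoy = \frac1{K+1}\sum_k\bar x^k$ produces
$$ f(\xmoy) - f(x^*) \;\le\; \frac{R^2}{2\alpha(K+1)} + \frac{\alpha B^2}{2} + \frac{2B}{N(K+1)}\sum_{k=0}^{K}\sum_{i=1}^{N}\|x_i^k - \bar x^k\|. $$

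\emph{Step 3 (bounding the accumulated disagreement --- the hard part).} Stacking the local iterates and gradients rowwise in $\mathbf x^k,\mathbf g^k \in \Rmat{N}{d}$ and subtracting agent averages, the centered iterates satisfy $\mathbf x_\perp^{k+1} = W\mathbf x_\perp^k - \alpha\,\mathbf g_\perp^k$ with $\mathbf x_\perp^0 = 0$ (common start), hence $\mathbf x_\perp^k = -\alpha\sum_{t=0}^{k-1} W^{\,k-1-t}\mathbf g_\perp^t$. Every column of $\mathbf g_\perp^t$ is orthogonal to $\mathbf 1$, and on that subspace $W$ has spectral norm $\max_{i\ge 2}|\lambda_i| \le \lambda$; therefore $\|\mathbf x_\perp^k\|_F \le \alpha\sum_{t=0}^{k-1}\lambda^{\,k-1-t}\|\mathbf g_\perp^t\|_F \le \alpha\sqrt N B\sum_{s\ge 0}\lambda^s = \frac{\alpha\sqrt N B}{1-\lambda}$, using $\|\mathbf g_\perp^t\|_F \le \|\mathbf g^t\|_F \le \sqrt N B$ (each row of $\mathbf g^t$ has norm $\le B$). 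A Cauchy--Schwarz step across agents then gives $\sum_i\|x_i^k-\bar x^k\| \le \sqrt N\,\|\mathbf x_\perp^k\|_F \le \frac{\alpha N B}{1-\lambda}$, so the last term in the Step 2 bound is at most $\frac{2\alpha B^2}{1-\lambda}$.

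\emph{Step 4 (conclusion).} Combining Steps 2 and 3, $f(\xmoy) - f(x^*) \le \frac{R^2}{2\alpha(K+1)} + \frac{\alpha B^2}{2} + \frac{2\alpha B^2}{1-\lambda}$; substituting $\alpha = 1/\sqrt K$ and using $\frac{\sqrt K}{K+1} \le \frac{1}{\sqrt K}$ for the first term yields exactly \eqref{eq:th_bound}. I expect Step 3 to be the main obstacle: obtaining the $\tfrac1{1-\lambda}$ factor requires recognizing that the disagreement is driven purely by accumulated gradient noise of size $O(\alpha B)$ per step (this is why the common initialization matters) and that it is damped geometrically on the complement of $\mathbf 1$; the rest is the textbook subgradient argument together with bookkeeping of the extra consensus-error terms.
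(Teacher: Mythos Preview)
The paper does not actually prove this theorem: it is quoted verbatim from the survey \cite{DGD} (Theorem 8 there) and used only as a literature baseline against which the PEP bounds are compared. So there is no ``paper's own proof'' to match.

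That said, your argument is correct and is precisely the standard route used in that literature: (i) average the iterates across agents using double stochasticity to get a perturbed centralized subgradient recursion; (ii) run the usual $\|\bar x^{k+1}-x^*\|^2$ telescoping argument, picking up a consensus-error term $\frac{2B}{N}\sum_i\|x_i^k-\bar x^k\|$ via convexity and $B$-Lipschitzness; (iii) bound the disagreement by unrolling $\mathbf x_\perp^{k+1}=W\mathbf x_\perp^k-\alpha\mathbf g_\perp^k$ from $\mathbf x_\perp^0=0$ and using the spectral contraction $\|W|_{\mathbf 1^\perp}\|\le\lambda$ together with $\|\mathbf g_\perp^t\|_F\le\sqrt N B$ to get $\|\mathbf x_\perp^k\|_F\le \alpha\sqrt N B/(1-\lambda)$; (iv) substitute $\alpha=1/\sqrt K$. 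Each step checks out, including the constants, and you correctly obtain the factor $2$ in the second term that the footnote mentions.
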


For comparison purposes, our PEP formulations of DGD use the same assumptions as Theorem \ref{thm:bound}.
Our first formulation characterizes the same performance measure as Theorem \ref{thm:bound}: $f(\xmoy)-f(x^*)$, with appropriate constraints for the initialization: $||x^0 - x^*||^2 \le R^2$, the set of functions (Theorem \ref{thm:interp}) and the algorithm iterates that are generated by DGD with a given matrix $W$ (equations \eqref{eq:DGD_cons} and \eqref{eq:DGD_comp}).
All these constraints are linearly Gram-representable and can then be used in the SDP formulation of PEP, according to Proposition \ref{prop:GramPEP}.
This formulation is referred to as the \emph{exact formulation} because it finds the exact worst-case performance of the algorithm in the specific case where it is used with the given matrix $W$.
The second formulation relaxes the consensus constraints imposed by equation \eqref{eq:DGD_cons} and replaces them with the constraints from Theorem \ref{thm:conscons}, with $-\lm=\lp = \lam$. Those are LMI Gram-representable (see Corollary \ref{cor:consD}) and can then be used in the SDP formulation of PEP, according to Proposition \ref{prop:GramPEP}. This formulation is referred to as the \emph{spectral formulation} and provides \emph{spectral worst-cases},
i.e. upper bounds on the exact worst-case performances of the algorithm, valid for any matrix $W \in \Wcl{-\lam}{\lam}$, i.e. for any symmetric generalized doubly stochastic matrix with a given range of eigenvalues. In particular, these spectral upper bounds also hold for non-negative matrices from $\Wcl{-\lam}{\lam}$ and can therefore be compared with the bound from Theorem \ref{thm:bound}.

In our experiments, we focus on the situation where $R = 1$ and $B = 1$, but the results obtained can be scaled up to general values of $R$ and $B$ using appropriate changes of variables, as explained in the appendix.
\smallskip

\paragraph*{Impact of the number of agents $N$}
In Fig. \ref{fig:wc_Nevol}, we observe that the results of the spectral formulation are \emph{independent} of the number of agents $N \ge 2$ there are in the problem. This is shown for $K$ = 5 iterations and for different spectral ranges.
This observation has been confirmed for other values of $K$ (10, 15, and 20).
Moreover, the theoretical performance bound from Theorem \ref{thm:bound} is also independent of $N$.
Therefore, in the sequel, we analyze the spectral formulation for $N=3$, which is the smallest value of $N$ that still allows non-trivial communication networks to be considered.

\begin{figure}[t]
  \vspace{0mm}
  \centering
  \includegraphics[width=0.475\textwidth]{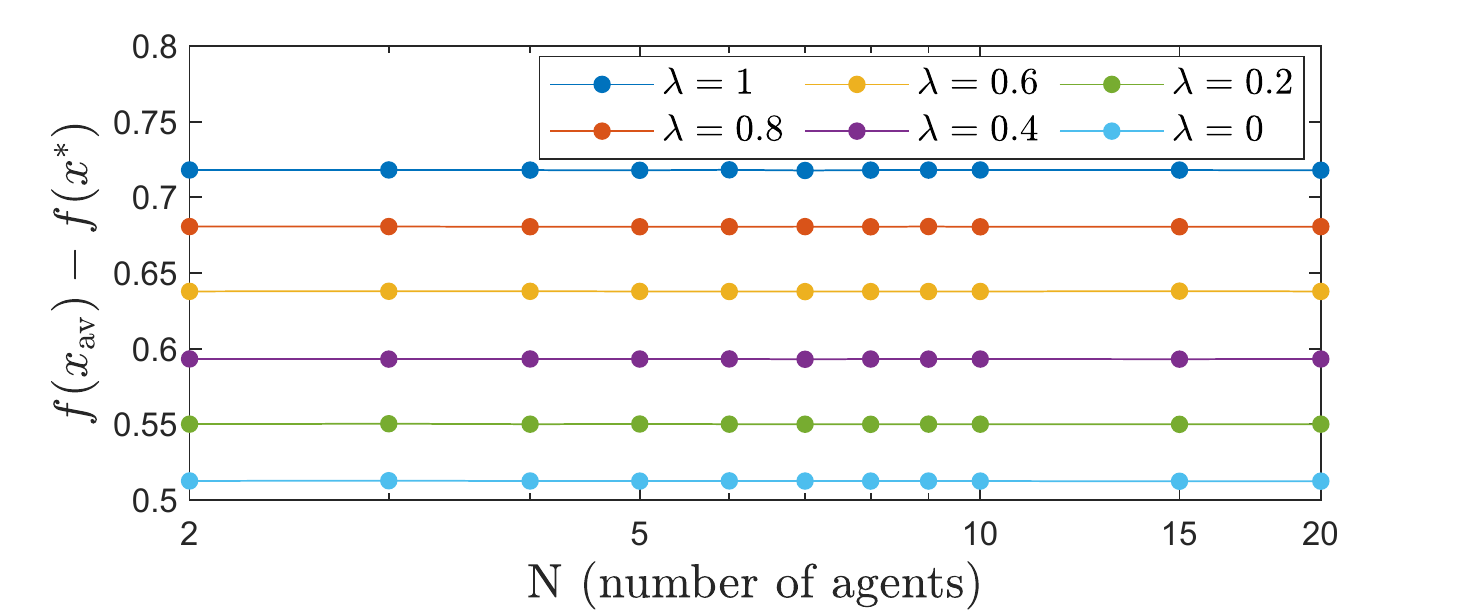}
    \vspace{-1mm}
    \caption{Independence of $N$ for the spectral worst-case performance of $K=5$ iterations of DGD in the setting~ of Theorem \ref{thm:bound}. \vspace{-5mm}}
  \label{fig:wc_Nevol}
\end{figure}
\smallskip
\paragraph*{Comparison with Theorem \ref{thm:bound}}
We compare the spectral bound with the theoretical bound from Theorem \ref{thm:bound} for different values of $\lam$.
Fig. \ref{fig:wc_lamevol_N3} shows the evolution of both bounds with $\lam$ for $K = 10$ iterations of DGD with $N = 3$ agents. We observe that the spectral worst-case performance bound (in blue) largely improves on the theoretical one (in red), especially when $\lam$ approaches 1, in which case the theoretical bound grows unbounded.
Moreover, having large values for $\lam$ is frequent for communication matrices of large networks of agents \cite{eigenBound}. Hence, the improvements of the bounds when $\lam$ is close to 1 are even more valuable.
For example, for a 5 by 5 grid of agents with Metropolis weights \cite{DGD}, the resulting communication matrix has eigenvalues $\lam_2,\dots,\lam_N \in \qty[-0.92,0.92]$.
In that case, after $K=10$ iterations, our spectral bound guarantees that the performance measure is below $0.85$, compared to 8.3 for the theoretical bound from Theorem \ref{thm:bound}.
This accuracy of $0.83$ would only be guaranteed using Theorem \ref{thm:bound} with $K = 950$. \smallskip

\paragraph*{Worst communication matrix and tightness analysis}
If the solution variables $X$ and $Y$ from the spectral PEP formulation can be linked by a unique matrix, then it can be found using the pseudo-inverse $\hat{W} = YX^+$. In practice, the matrix may not exist or may not be unique but $\hat{W}$ still provides an approximation of the worst-case matrix. When considering the spectral formulation with a symmetric spectral range $-\lm=\lp=\lam$ and $K$ large enough, we observe that it recovers, within numerical errors, matrices of the following form \vspace{-1.5mm}
\begin{equation} \label{eq:mat}
  [W^{(1)}]_{ij} = \begin{cases} \frac{1+\lambda}{N} & \text {if } i \ne j, \\ 1 - (N-1) \frac{1+\lambda}{N} & \text{if } i=j \end{cases}
\end{equation}
Matrix $W^{(1)}$ is symmetric and is generalized doubly stochastic, leading $1$ to be one of its eigenvalues. All its other eigenvalues are equal to $-\lam$. The bound obtained using the exact PEP formulation with this specific matrix $W^{(1)}$ for $K=10$ is plotted in green in Fig. \ref{fig:wc_lamevol_N3} and \emph{exactly} reaches the spectral bound in blue, within numerical errors. In that case, the spectral formulation, even though it is a relaxation, provides a \emph{tight performance bound} for DGD with symmetric generalized doubly stochastic matrices. This observation has been confirmed for other values of $K$ (5, 15, and 20). \vspace{1pt}

\paragraph*{Doubly stochastic versus generalized doubly stochastic}
Since every doubly stochastic matrix is also generalized doubly stochastic, the spectral bound also provides an upper bound on the performance of DGD with symmetric doubly stochastic matrices. This bound remains tight for $\lam \le \frac{1}{N-1}$ because the worst-case matrix $W^{(1)}$ \eqref{eq:mat} we have obtained is non-negative and is therefore doubly stochastic. For $\lam > \frac{1}{N-1}$,  this is no longer the case and the analysis is performed by empirically looking for symmetric stochastic communication matrices leading to the worst performance.
In Fig. \ref{fig:wc_lamevol_N3}, for $N=3$ and $\lam>0.5$, we have generated more than 6000 random symmetric doubly stochastic 3 by 3 matrices. We have analyzed their associated DGD performance using the exact PEP formulation and have only kept those leading to the worst performances.
The resulting pink curve deviates no more than 20\% below the spectral bound. In that case, the spectral bound is thus no longer tight for DGD with doubly stochastic matrices but remains very relevant.
This observation has been confirmed for other values of $K$ and $N$ ($N = 3,5,7$, and $K=10,15$). \vspace{1pt}

\paragraph*{Evolution with the total number of iterations $K$}
Fig. \ref{fig:wc_Kevol} shows the evolution of the spectral worst-case performance for DGD multiplied by $\sqrt{K}$, for different values of $\lam$. Except when $\lam = 1$, all lines tend to a constant value, meaning that the spectral bound behaves in $\bigO\qty(\frac{1}{\sqrt{K}})$, as the theoretical bound \eqref{eq:th_bound}, but with a much smaller hidden constant.
When $\lam = 1$, the line grows linearly and never reaches a constant value. In that case, the worst communication matrices lead to counterproductive interactions, preventing DGD from working in the worst case.

\paragraph*{Tuning the step-size $\alpha$} \vspace{1pt}
The PEP methodology allows us to easily tune the parameters of a method. For example, Fig. \ref{fig:wc_alphevol} shows the evolution of the spectral worst-case performance of DGD with the constant step-size it uses, in the setting of Theorem \ref{thm:bound} with $N=3$, $K=10$ and $\lam = 0.8$.
In that case, we observe that the value $\alpha = \frac{1}{\sqrt{K}}$ used in Theorem \ref{thm:bound} for deriving the theoretical performance bound is not the best possible choice for $\alpha$ and should be divided by two to improve the performance guarantees by 30\%.
The optimal value for $\alpha$, regarding our spectral bound, is the one that provides the best worst-case guarantee,
whatever the communication matrix from $\Wcl{-\lam}{\lam}$ is used.

The impact of the step-size on the other experiments and observations can be studied by setting $\alpha = \frac{h}{\sqrt{K}}$, for some $h > 0 $. We focused on $h=1$ for comparison with the theoretical bound from Theorem \ref{thm:bound}. Nevertheless, all our other observations have been confirmed for $h = 0.1, 0.5, 2, 10$.

\begin{figure}[h!]
  \vspace{-2mm}
  \centering
  \includegraphics[width=0.475\textwidth]{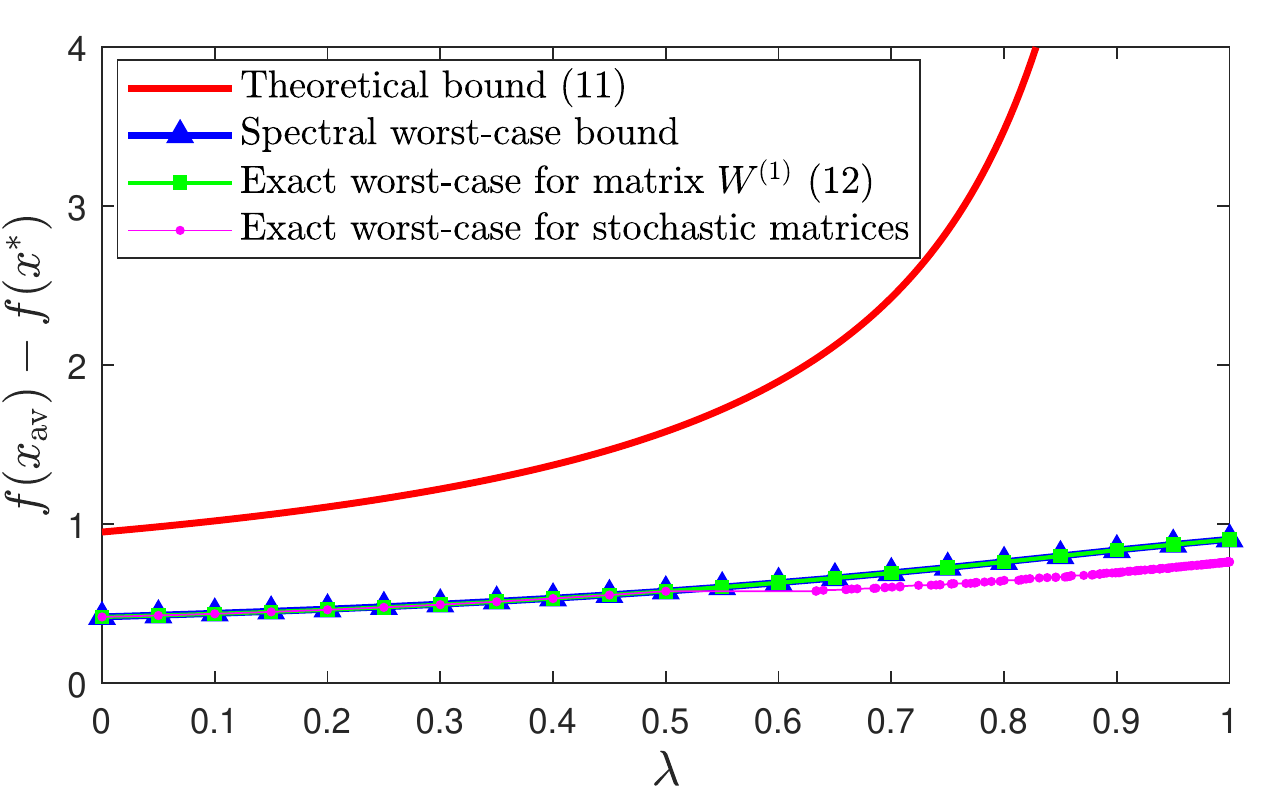}
  \vspace{-1mm}
  \caption{Evolution with $\lam$ of the worst-case performance of $K = 10$ iterations of DGD in the setting of Theorem \ref{thm:bound} with $N = 3$ agents. The plot shows (i) the theoretical bound from equation \eqref{eq:th_bound} (in red), largely above (ii) the spectral worst-case performance (in blue), (iii) the exact worst-case performance for the symmetric generalized doubly stochastic matrix $W^{(1)}$ from equation \eqref{eq:mat} (in green) and (iv) the exact worst-case performance for symmetric doubly stochastic matrices found based on an exhaustive exploration of such matrices used in the exact PEP formulation (in pink).
  This indicates the tightness of the spectral formulation of PEP for DGD with symmetric generalized doubly stochastic matrices, within numerical errors. \vspace{-3mm}
  }
  \label{fig:wc_lamevol_N3}
\end{figure}

\begin{figure}[h!]
  \vspace{-3mm}
  \centering
  \includegraphics[width=0.475\textwidth]{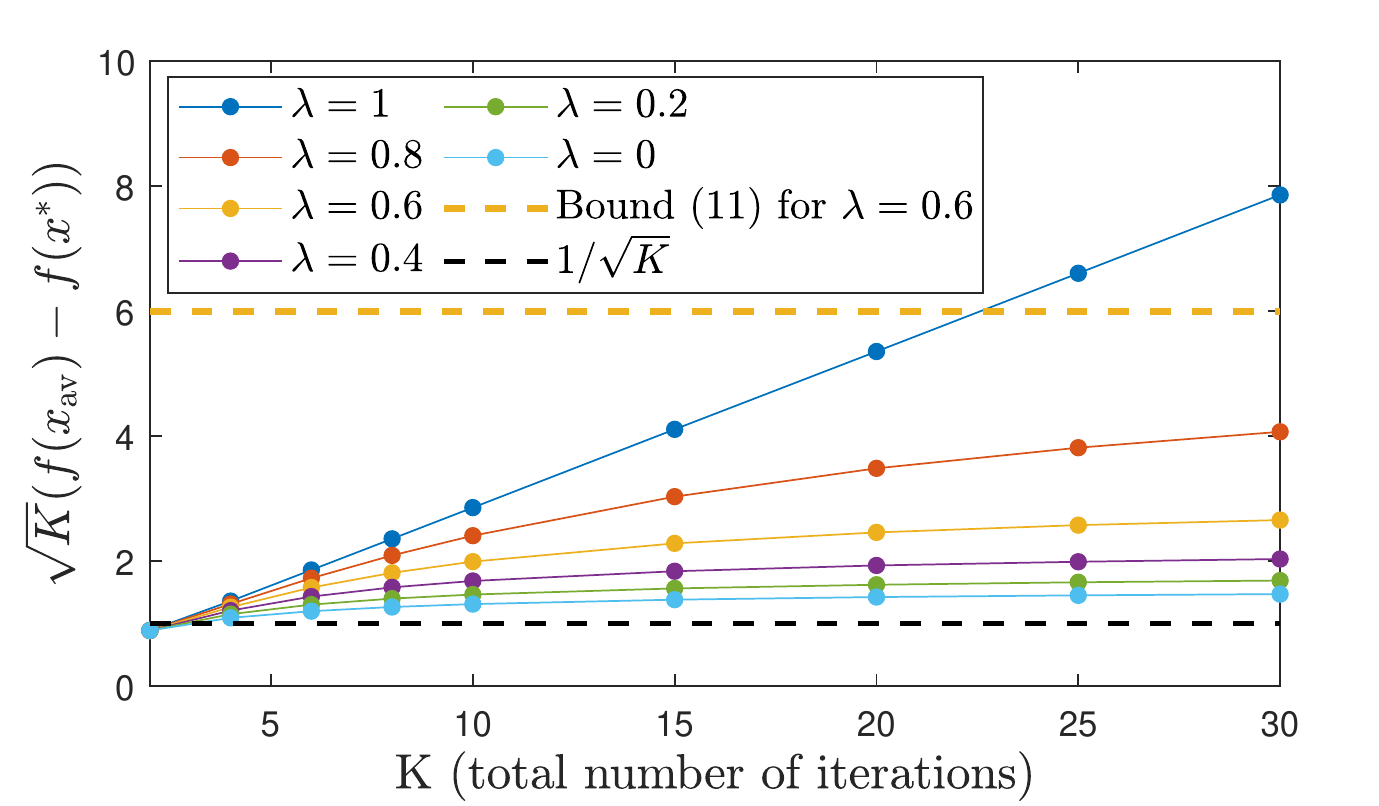}
 \vspace{-1mm}
  \caption{Evolution with $K$ of the \emph{normalized} spectral worst-case performance of $K$ iterations of DGD in the setting of Theorem \ref{thm:bound} with $N = 3$. The shown spectral worst-cases are normalized by $\frac{1}{\sqrt{K}}$ to show that they evolve at this rate.
  \vspace{-2mm}}
  \label{fig:wc_Kevol}
\end{figure}

\begin{figure}[h!]
  \vspace{-4mm}
  \centering
  \includegraphics[width=0.45\textwidth]{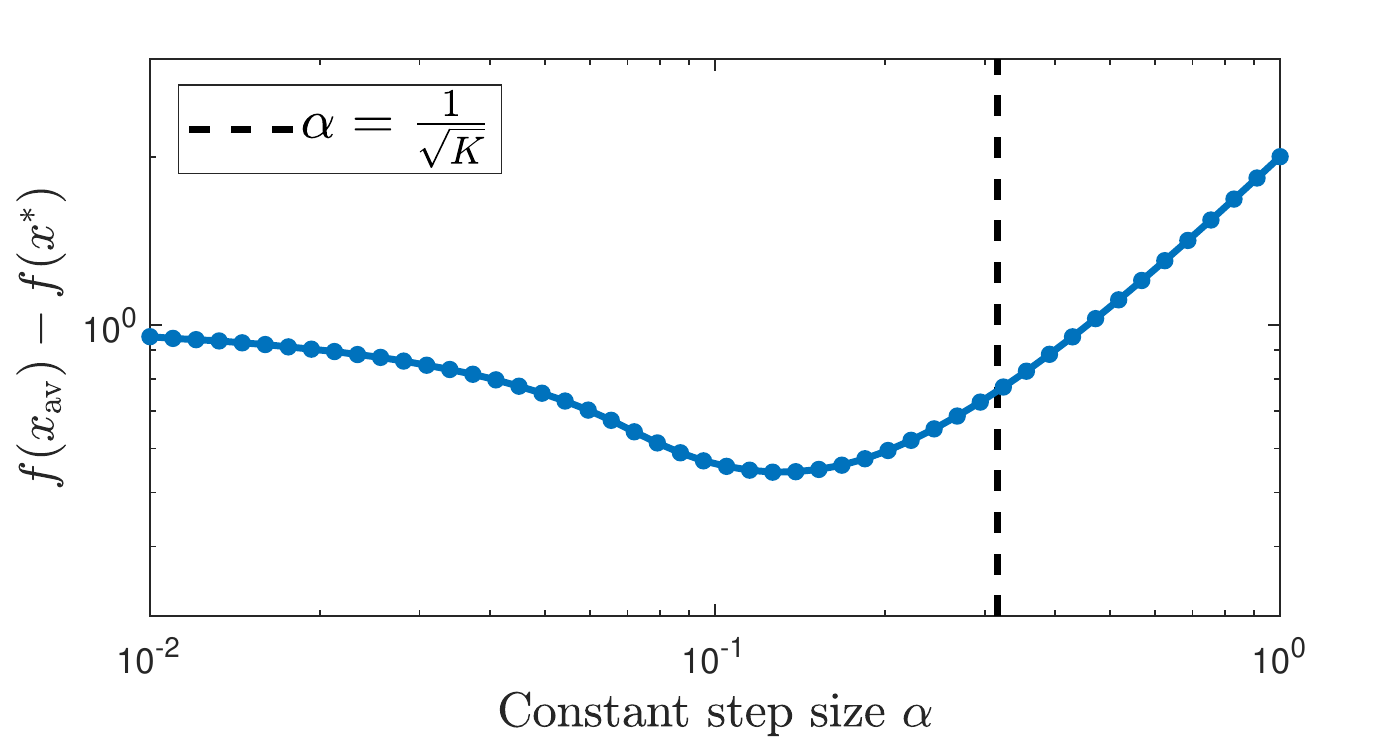}
  \vspace{-1mm}
  \caption{Evolution with $\alpha$ of the spectral worst-case performance of $K = 10$ iterations of DGD in the setting of Theorem \ref{thm:bound} with $N$ = 3 agents and $\lam = 0.8$ (except for $\alpha$). \vspace{-4mm}}
  \label{fig:wc_alphevol}
\end{figure}

\paragraph*{Code and Toolbox}
PEP problems are solved using the PESTO Matlab toolbox \cite{PESTO}, with Mosek solver, within 200 seconds. For example, for $N=3$, the time needed for a regular laptop to solve the spectral formulation is about 3, 12, 48, and 192 seconds respectively for $K=5, 10, 15, 20$.  The PESTO toolbox is available on \textsc{Github} (repository AdrienTaylor/Performance-Estimation-Toolbox). We have updated PESTO to allow easy and intuitive PEP formulation of gradient-type decentralized optimization methods, and we have added a code example for DGD.
\section{Conclusion}
We have developed two representations of consensus steps that can be embedded in the performance estimation problem (PEP) in order to automatically compute the worst-case performance of decentralized optimization methods in which such consensuses appear.
Our first formulation uses a given communication matrix to directly incorporate
the updates of the chosen method as constraints over the iterates. It provides the exact worst-case performance of the method for this specific matrix.
The second formulation provides upper bounds on the worst-case performance that are valid for an entire spectral class of matrices. It relaxes the specific consensus constraints and adds new necessary constraints for the given spectral class of matrices.
Although the second formulation is a relaxation, the performance guarantees it provides for DGD largely improve on the theoretical existing ones and are numerically tight for the class of symmetric generalized doubly stochastic communication matrices. Moreover, these resulting spectral bounds for DGD are independent of the number of agents in the problem and help for better tuning of the step-size.
Further works could exploit this new spectral formulation to analyze, develop our understanding, and improve many other decentralized methods.
Finally, this work could also help in the creation of new decentralized methods.

\section*{Acknowledgments}
The authors wish to thank Adrien Taylor for his helpful advice concerning the PESTO toolbox.

\bibliographystyle{IEEEtran}
\bibliography{refs.bib}

\appendix[Note on scaling]
\label{annexe:scaling}
We consider general positive values for parameters $R > 0$ and $B > 0$ and we parametrize the step-size by $\alpha = \frac{Rh}{B\sqrt{K}}$, for some $h > 0$. To pass from this general problem to the specific case where $R=1$ and $B=1$, we consider the following changes of variables:
$$\tilde{x} = \frac{x}{R}, \quad \tilde{f}(\tilde{x}) = \frac{1}{RB} f(x) \quad \text{ and } \quad \tilde{\alpha} = \frac{B \alpha}{R}.$$
These changes of variables do not modify the nature of the problem and allow expressing the worst-case guarantee obtained for $f(\xmoy ) - f(x^*)$ with general values of $R$, $B$, and $h$, denoted $w(R,B,h)$, in terms of the worst-case guarantee obtained for $\tilde{f}(\tilde{x}_{\mathrm{av}}) - \tilde{f}(\tilde{x}^*)$ with $R=B=1$:
\begin{equation} \label{eq:scal_wc}
  w(R,B,h) = RB~ \tilde{w}(1,1,h).
\end{equation}
The same kind of scaling can be applied to Theorem \ref{thm:bound}. The theorem is valid for general values of $R$ and $B$ but is specific to $\alpha = \frac{1}{\sqrt{K}}$, which is equivalent to picking $h = \frac{B}{R}$. After the scaling, we obtain the following bound, valid for $R = 1$, $B = 1$ and any value of $\alpha = \frac{h}{\sqrt{K}}$ with $h>0$:
\begin{equation} \label{eq:th_bound_scal}
  \tilde{f}(\tilde{x}_{\mathrm{av}} ) - \tilde{f}(\tilde{x}^*) \le \frac{h^{-1} + h}{2 \sqrt{K}} + \frac{2 h}{\sqrt{K}(1-\lam)}.
\end{equation}
This scaled theoretical bound with $h=1$ is equivalent to the bound from Theorem \ref{thm:bound} with $R=B=1$, which was the focus of the numerical analysis in Section \ref{sec:NumRes}. \\
This bound \eqref{eq:th_bound_scal} can be extended to any value of $R > 0$ and $B > 0$, using the relation from equation \eqref{eq:scal_wc}:
\begin{equation} \label{eq:th_bound_scal_2}
  f(\xmoy ) - f(x^*) \le RB \qty(\frac{h^{-1} + h}{2 \sqrt{K}} + \frac{2 h}{\sqrt{K}(1-\lam)}).
\end{equation}

\end{document}